\documentclass[12pt]{article}
\usepackage[english]{babel}
\usepackage[utf8]{inputenc}
\usepackage{geometry}
\usepackage{amsmath,amsthm,amssymb,amsfonts}
\usepackage{mathtools}
\usepackage{bbm}
\usepackage{xcolor}
\usepackage{dsfont}
\usepackage{booktabs}
\usepackage{paralist}
\usepackage{tikz}
\usepackage{titlesec}
\usepackage{xcolor}
\usepackage[colorlinks=true,pdfstartview=FitV,linkcolor=blue,citecolor=blue,urlcolor=blue,final]{hyperref}
\numberwithin{equation}{section}
\theoremstyle{plain}
\newtheorem{theorem}{Theorem}[section]
\newtheorem{corollary}[theorem]{Corollary}

\newtheorem{lemma}[theorem]{Lemma}
\newtheorem{remark}[theorem]{Remark}

\newcommand{\ii}{\mathsf{i}}
\newcommand{\T}{\mathbb{S}^1}
\newcommand{\Z}{\mathbb{Z}}
\newcommand{\e}{\mathrm{e}}
\newcommand{\keywords}[1]{%
  \par\noindent\textbf{Keywords:} #1%
}

\newcommand{\msc}[1]{%
  \par\noindent\textbf{2020 Mathematics Subject Classification:} #1%
}

\titleformat{\section}
	{\centering\normalfont\Large}{\thesection.}{0.5em}{}

\titleformat{\subsection}[runin]
	{\normalfont\bfseries}{\thesubsection.}{0.5em}{\addperiod}

\titleformat{\subsubsection}[runin]
	{\normalfont\bfseries}{\thesubsubsection.}{0.5em}{\addperiod}

\titleformat*{\subsubsection}{\normalfont\itshape}
\titleformat*{\paragraph}{\bfseries}
\titleformat*{\subparagraph}{\large\bfseries}

\title{\bf \Large Unimodular Zeros of Selmer's Harmonic Trinomials}

\author{Gerardo Barrera
\thanks{Centro de Análise Matemática, Geometria e Sistemas Dinâmicos, Departamento de Matemática, Instituto Superior Técnico, Universidade de Lisboa. Av. Rovisco Pais 1, 1049-001, Lisboa, Portugal.
{\footnotesize
\href{https://orcid.org/0000-0002-8012-2600}{https://orcid.org/0000-0002-8012-2600}\,
\href{mailto:gerardo.barrera.vargas@tecnico.ulisboa.pt}
{gerardo.barrera.vargas@tecnico.ulisboa.pt}.}}
\and
Waldemar Barrera
\thanks{Facultad de Matem\'aticas, Universidad Aut\'onoma de Yucat\'an.
Anillo Perif\'erico Norte Tablaje CAT 13615, M\'erida, Yucat\'an, M\'exico.
{\footnotesize
\href{https://orcid.org/0000-0001-6885-5556}{https://orcid.org/0000-0001-6885-5556}\,
\href{mailto:bvargas@correo.uady.mx}{bvargas@correo.uady.mx}.}}
\and
Juan Pablo Navarrete
\thanks{Facultad de Matem\'aticas, Universidad Aut\'onoma de Yucat\'an.
Anillo Perif\'erico Norte Tablaje CAT 13615, M\'erida, Yucat\'an, M\'exico.
{\footnotesize
\href{https://orcid.org/0000-0002-3930-4365}{https://orcid.org/0000-0002-3930-4365}\,
\href{mailto:jp.navarrete@correo.uady.mx}{jp.navarrete@correo.uady.mx}.}}
}

\date{}

\begin{document}
\maketitle
\begin{abstract}
Let $n>k$ be positive integers and let
$\varepsilon,\delta\in\{-1,1\}$. In this note, we classify the unimodular zeros of the
harmonic trinomial $H_{\varepsilon,\delta}:\mathbb{C}\to \mathbb{C}$ given by 
\[
H_{\varepsilon,\delta}(z)=z^n+\varepsilon\overline z^{\,k}+\delta,
\]
where $\overline{z}$ is the complex conjugate of $z\in \mathbb{C}$.
Let $g:=\gcd(n,k)$ be the greatest common divisor of $n$ and $k$, and set $n_0:=n/g$ and $k_0:=k/g$. For each choice of
$\varepsilon,\delta\in \{-1,1\}$, we give a necessary and sufficient congruence
involving $n_0$ and $k_0$ to guarantee that $H_{\varepsilon,\delta}$ has unimodular
zeros. More precisely, if the corresponding congruence holds, then
$H_{\varepsilon,\delta}$ has exactly $2g$ unimodular zeros, which are given
explicitly; otherwise, it has no unimodular zeros. The proof uses a reduction to a trinomial equation and elementary divisibility arguments.
\end{abstract}

\keywords{Harmonic Trinomials, Roots of Unity, Selmer's Harmonic Trinomials, Unimodular Zeros, Unit Circle}

\msc{Primary 12D10, 30C15; Secondary 26C10, 30C10.}

\section{Introduction}
Brilleslyper and Schaubroeck \cite{BrilleslyperSchaubroeck} characterize the unimodular zeros (zeros of modulus one) of $z^n+z^k-1$ in terms of arithmetic congruence conditions on the exponents $n$ and $k$. This motivates the corresponding question for harmonic trinomials: how do the exponents and the choices of signs determine whether the trinomial has unimodular zeros?

In this note, for integers $n>k\geq 1$, we study a harmonic analogue of Selmer's trinomials, a family with applications in algebra and number theory~\cite{Ljunggren1960,Selmer1956,Tverberg1960}. More precisely, for $\varepsilon,\delta\in\{-1,1\}$, consider the complex-valued harmonic trinomial
\begin{equation}\label{eq:H}
H_{\varepsilon,\delta}(z)
:=z^n+\varepsilon\overline z^{\,k}+\delta.
\end{equation}
In what follows, let
$g:=\gcd(n,k)$ be the greatest common divisor of $n$ and $k$, and define
\begin{equation}\label{eq:n0k0}
n_0:=n/g\quad \textrm{ and }\quad k_0:=k/g.
\end{equation}
Our goal is to determine when $H_{\varepsilon,\delta}$ has a zero on
the unit circle $\T:=\{u\in \mathbb{C}:|u|=1\}$ and to give all such zeros explicitly.

A problem posed naturally in the theory of \textit{harmonic trinomials} turns out, on the unit circle, to transform into a problem involving \textit{trinomials} and roots of unity. More precisely,
If $z\in \T$, then $\overline z^{\,k}=z^{-k}$, and hence
\begin{equation}\label{eq:polynomial-reduction}
H_{\varepsilon,\delta}(z)=0
\quad\textrm{if and only if}\quad
T(z):=z^{n+k}+\delta z^k+\varepsilon=0.
\end{equation}

Consequently, the problem can be reduced to the study of unimodular roots of the trinomial $T$. For general results on root counts of trinomials and harmonic trinomials,  see \cite{BBNRoots,BBNStabi,Bohl,Botta,BRILLESLYPER,BrilleslyperSchaubroeck2018,Cermak2022} and the references therein; for geometric root configurations, see \cite{BBNEgervary,egervaryoriginal,Gao}; and for the geometry and topology of parameter spaces, see \cite{BCNParameter,BMNAV,TheobaldWolff}.

The main result of this note is the following.
\begin{theorem}[Unimodular zeros]\label{thm:main} 
The harmonic trinomial $H_{\varepsilon,\delta}$ defined in \eqref{eq:H}
has unimodular zeros if and only if the corresponding condition below holds. The possible values of the $g$th powers of its unimodular zeros are listed in the third column.
\begin{center}
\begin{tabular}{@{}clc@{}}
\toprule
$(\varepsilon,\delta)$ & \multicolumn{1}{c}{condition} & values of $z^g$ \\
\midrule
$(1,1)$
  & $n_0\equiv k_0\pmod{3}$
  & $\e^{\pm \ii\, 2\pi /3}$ \\

$(1,-1)$
  & $n_0\equiv k_0\pmod{6}$
  & $\e^{\pm \ii\,\pi /3}$ \\

$(-1,1)$
  & $n_0+2k_0\equiv 0\pmod{6}$
  & $\e^{\pm \ii\,\pi /3}$ \\

$(-1,-1)$
  & $4n_0-k_0\equiv 0\pmod{6}$
  & $\e^{\pm \ii\,\pi /3}$ \\
\bottomrule
\end{tabular}
\end{center}
Moreover, if the condition in the second column holds, then $H_{\varepsilon,\delta}$ has exactly $2g$
distinct unimodular zeros.
\end{theorem}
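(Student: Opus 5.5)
The plan is to work with the reduction \eqref{eq:polynomial-reduction} and exploit the fact that on $\T$ all three terms of $T$ have modulus one. If $z\in\T$ satisfies $z^n+\varepsilon z^{-k}+\delta=0$, set $a:=z^n$ and $b:=\varepsilon z^{-k}$. Then $a,b\in\T$ and $a+b=-\delta$ with $|{-\delta}|=1$. Three unit vectors summing to zero must form an equilateral configuration. Concretely, $|a+b|=1$ forces $|1+b/a|=1$, so $b/a=\e^{\pm \ii 2\pi/3}$. Solving, we get $a=-\delta\,\e^{\mp\ii\pi/3}$ and $b=-\delta\,\e^{\pm\ii\pi/3}$. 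Hence the unimodular zeros are exactly the $z\in\T$ satisfying
\[
z^n=-\delta\,\omega^{-1},\qquad z^{-k}=-\varepsilon\delta\,\omega,\qquad \omega\in\{\e^{\ii\pi/3},\e^{-\ii\pi/3}\}.
\]
Equivalently, $z^n=\alpha$ and $z^k=\beta$ with $\alpha=-\delta\bar\omega$ and $\beta=-\varepsilon\delta\bar\omega$, both sixth roots of unity.

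Next I solve this simultaneous system. Write $n=gn_0$, $k=gk_0$, and put $w:=z^g$. Since $\gcd(n_0,k_0)=1$, choose Bézout coefficients $pn_0+qk_0=1$; then $w=w^{pn_0+qk_0}=\alpha^p\beta^q$ is uniquely determined by $(\alpha,\beta)$. Conversely, such a $w$ exists if and only if the compatibility condition $\alpha^{k_0}=\beta^{n_0}$ holds. Indeed, if $\alpha^{k_0}=\beta^{n_0}$, then $w:=\alpha^p\beta^q$ gives $w^{n_0}=\alpha^{1-qk_0}\beta^{qn_0}=\alpha$, and symmetrically $w^{k_0}=\beta$. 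For each admissible $\omega$, $w$ is therefore a unique sixth root of unity, and $z$ ranges over its $g$ distinct $g$th roots. The two choices of $\omega$ give distinct $w$ (because $\alpha$ differs), hence disjoint sets of $z$. This yields exactly $2g$ zeros whenever both $\omega$ are admissible.

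The remaining step is to translate $\alpha^{k_0}=\beta^{n_0}$ into the congruences of the table, case by case. Parametrize $\omega=\e^{\pm\ii\pi/3}$ and write $\alpha=\e^{\ii\pi a/3}$, $\beta=\e^{\ii\pi b/3}$ with residues $a,b$ mod $6$ depending on $(\varepsilon,\delta)$. The condition then reads $ak_0\equiv bn_0\pmod 6$.
\begin{itemize}
\item For $(1,1)$: $\alpha=\beta=-\bar\omega=\e^{\pm\ii 2\pi/3}$. The condition becomes $2(k_0-n_0)\equiv0\pmod 6$, i.e.\ $n_0\equiv k_0\pmod 3$, and then $w=\alpha^{p+q}$. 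Since $p+q\equiv p n_0+qk_0=1\pmod 3$ under the congruence, $w=\e^{\pm\ii2\pi/3}$.
\item For $(1,-1)$: $\alpha=\beta=\bar\omega$. The condition is $n_0\equiv k_0\pmod6$, and $w=\bar\omega$.
\item For $(-1,1)$: $\alpha=-\bar\omega$ and $\beta=\bar\omega$.
\item For $(-1,-1)$: $\alpha=\bar\omega$ and $\beta=-\bar\omega$.
\end{itemize}
In each case I reduce $ak_0\equiv bn_0\pmod 6$ to the listed congruence, using that the condition for $\omega$ and for $\bar\omega$ are negatives of each other mod $6$, so both hold or neither does. I then read off $w$ similarly.

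The main obstacle is this bookkeeping. One must check that the mod-$6$ conditions for the two mixed-sign cases match the stated forms $n_0+2k_0\equiv0$ and $4n_0-k_0\equiv0\pmod6$, and that the resulting $w$ is $\e^{\pm\ii\pi/3}$ rather than another sixth root. I would verify this by computing $w^{n_0}$ and $w^{k_0}$ directly for the claimed $w$, together with the coprimality constraint. That constraint rules out, for example, both $n_0,k_0$ even, and it is what makes the equivalence exact.
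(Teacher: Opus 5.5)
Your argument is correct in substance and takes a somewhat different route from the paper's. Both start from the same fact: three unimodular numbers summing to zero form an equilateral triangle (cf.\ Lemma~\ref{lem:geometry}). The paper then uses B\'ezout only to force $w^3=1$, discards $w=1$, and checks the residues of $n_0,k_0$ modulo $3$ by hand, writing out only the case $(1,1)$. You instead use the solvability criterion for the system $w^{n_0}=\alpha$, $w^{k_0}=\beta$ with $\gcd(n_0,k_0)=1$: a solution exists iff $\alpha^{k_0}=\beta^{n_0}$, and it is then unique, $w=\alpha^p\beta^q$. This turns all four sign cases into one linear congruence $ak_0\equiv bn_0\pmod 6$, gives both directions of the equivalence at once, and yields the count $2g$ directly. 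The bookkeeping you flag does go through. For $\omega=\e^{\ii\pi/3}$ one gets $(a,b)=(2,2),(5,5),(2,5),(5,2)$ in the cases $(1,1),(1,-1),(-1,1),(-1,-1)$. These give $2k_0\equiv 2n_0$, $k_0\equiv n_0$, $n_0+2k_0\equiv0$ and $2n_0+k_0\equiv0$ modulo $6$, and the last is equivalent to $4n_0-k_0\equiv0\pmod 6$.

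One step is wrong as written, although the conclusion survives. In case $(1,1)$, the congruence $n_0\equiv k_0\pmod 3$ gives $pn_0+qk_0\equiv(p+q)n_0$, so $p+q\equiv n_0\pmod 3$, not $p+q\equiv 1$. For example, $n_0=5$, $k_0=2$, $p=1$, $q=-2$ gives $w=\alpha^{-1}=\bar\alpha$ rather than $\alpha$. Similarly, in case $(1,-1)$ you get $w=\bar\omega^{\,n_0}$, which equals $\omega$ when $n_0\equiv5\pmod 6$. The two admissible $\omega$ still produce two distinct $w$, so the set of values of $z^g$ is still as in the table. A clean fix, which also settles the value of $w$ in the mixed cases, is an order argument. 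In case $(1,1)$, $w=\alpha^{p+q}$ is a cube root of unity with $w^{n_0}=\alpha\neq1$. In the other three cases, $w=\alpha^p\beta^q$ is a sixth root of unity, and one of $w^{n_0},w^{k_0}$ equals $\bar\omega$, which has order $6$. Hence $w$ is a primitive cube, resp.\ sixth, root of unity, which is exactly the third column.
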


Theorem~\ref{thm:main} yields the following explicit formula for the unimodular zeros.

\begin{corollary}[Characterization]\label{cor:explicit}
For $\varepsilon,\delta\in\{-1,1\}$, define
\[
 \theta:=\begin{cases}
 2\pi/3,&(\varepsilon,\delta)=(1,1),\\
 \pi/3,&(\varepsilon,\delta)\ne(1,1).
 \end{cases}
\]
Under the corresponding congruence in Theorem~\ref{thm:main},
the unimodular zeros of $H_{\varepsilon,\delta}$ are given by
\begin{equation}\label{eq:explicit-roots}
 z_j^{\pm}
   =\exp\!\left(\ii\frac{(\pm\theta+2\pi j)}{g}\right),
 \quad j=0,1,\ldots,g-1.
\end{equation}
\end{corollary}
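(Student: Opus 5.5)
The plan is to read the corollary off from Theorem~\ref{thm:main}: the theorem already tells us which values $w:=z^g$ the unimodular zeros can have, and that there are exactly $2g$ of them. All that remains is to invert the map $z\mapsto z^g$ on $\T$ and count.

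\textbf{Step 1 (necessity).} Let $z\in\T$ be a zero of $H_{\varepsilon,\delta}$. By the third column of the table in Theorem~\ref{thm:main}, $z^g\in\{\e^{\ii\theta},\e^{-\ii\theta}\}$, with $\theta$ as defined in the corollary. This covers both cases, since $\theta=2\pi/3$ for $(1,1)$ and $\theta=\pi/3$ otherwise. The solutions of $z^g=\e^{\pm\ii\theta}$ are precisely $z=\exp(\ii(\pm\theta+2\pi j)/g)$ for $j=0,\dots,g-1$. Hence every unimodular zero lies in the set $\{z_j^{\pm}\}$ of \eqref{eq:explicit-roots}.

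\textbf{Step 2 (counting).} The $2g$ numbers $z_j^\pm$ are pairwise distinct. Indeed, their $g$th powers separate the two signs, because $\e^{\ii\theta}\neq\e^{-\ii\theta}$: here $\theta\in\{\pi/3,2\pi/3\}$ is not a multiple of $\pi$. Within a fixed sign, the arguments differ by $2\pi(j-j')/g$, which is not a multiple of $2\pi$ when $0<|j-j'|<g$.

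\textbf{Step 3 (sufficiency).} Under the congruence, Theorem~\ref{thm:main} says there are exactly $2g$ distinct unimodular zeros, and by Step 1 they all lie in a set of exactly $2g$ elements. So the zero set equals $\{z_j^\pm\}$.

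As a sanity check, one can also argue directly via \eqref{eq:polynomial-reduction}. Write $z^{n+k}=w^{n_0+k_0}$ and $z^k=w^{k_0}$ with $w=z^g$, and check that $T(z)=0$ for each case's congruence. For example, in case $(1,1)$ with $w=\e^{2\pi\ii/3}$ and $n_0\equiv k_0\pmod 3$, we get $w^{n_0+k_0}+w^{k_0}+1=w^{2k_0}+w^{k_0}+1=0$, since $3\nmid k_0$ (because $\gcd(n_0,k_0)=1$).

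There is no real obstacle. The only point requiring care is Step 2, namely that $\theta\not\equiv-\theta \pmod{2\pi}$, so that the two families are disjoint.
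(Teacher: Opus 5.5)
Your proof is correct and takes essentially the same route as the paper: you solve $z^g=\e^{\pm\ii\theta}$ explicitly, observe that each sign gives $g$ distinct roots, and note that the two families are disjoint because $\e^{\ii\theta}\neq\e^{-\ii\theta}$. Your pigeonhole step in Step 3 uses the theorem's count of exactly $2g$ zeros to show that every candidate $z_j^{\pm}$ is actually a zero, which makes explicit a direction the paper's short proof leaves implicit.
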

\begin{proof}
In all cases covered by Theorem~\ref{thm:main}, the solutions of $z^g=\e^{\pm\ii\theta}$ are
\begin{equation}\label{eq:roots}
z_j^{\pm}
=\exp\left(\ii\frac{(\pm\theta+2\pi j)}{g}\right),
\quad j=0,\ldots,g-1.
\end{equation}
Each sign gives $g$ distinct solutions. The two sets are disjoint because
$\e^{\ii\theta}\ne\e^{-\ii\theta}$. Consequently, there are exactly $2g$ unimodular
zeros.
\end{proof}

We now prove Theorem~\ref{thm:main}. We first reduce the exponents to a coprime pair.
\begin{lemma}[Reduction lemma]\label{lem:reduction}
For $z\in\T$, set $w:=z^g$.
Then
\begin{equation}\label{eq:reduced}
H_{\varepsilon,\delta}(z)=0
\quad\textrm{if and only if}\quad
w^{n_0}+\varepsilon w^{-k_0}+\delta=0.
\end{equation}
Every solution $w\in\T$ of the equation on the right-hand side of \eqref{eq:reduced} has exactly $g$
preimages in $\T$ under the map $z\mapsto z^g$.
\end{lemma}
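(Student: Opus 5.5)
The plan is to verify the equivalence in \eqref{eq:reduced} by a direct substitution, and then to count preimages under the power map $z\mapsto z^g$ on $\T$.

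\textbf{The equivalence.} Fix $z\in\T$. Then $\overline z=z^{-1}$, so $\overline z^{\,k}=z^{-k}$. Since $n=gn_0$ and $k=gk_0$, we can write $z^n=(z^g)^{n_0}=w^{n_0}$ and $z^{-k}=(z^g)^{-k_0}=w^{-k_0}$. Therefore
\[
H_{\varepsilon,\delta}(z)=w^{n_0}+\varepsilon w^{-k_0}+\delta ,
\]
and the two sides of \eqref{eq:reduced} vanish simultaneously. This is the same computation as the reduction \eqref{eq:polynomial-reduction}, only without multiplying through by $z^k$. Note also that $|w|=|z|^g=1$, so $w\in\T$ automatically.

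\textbf{The preimage count.} Fix $w=\e^{\ii\varphi}\in\T$. We determine all $z\in\mathbb{C}$ with $z^g=w$.

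First, such a $z$ must satisfy $|z|^g=1$, hence $|z|=1$. So every complex solution of $z^g=w$ already lies in $\T$.

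Next, the points $z_j=\e^{\ii(\varphi+2\pi j)/g}$ for $j=0,\dots,g-1$ all satisfy $z_j^g=w$. They are pairwise distinct, because their arguments differ by $2\pi(j-j')/g$, which is not a multiple of $2\pi$ when $0<|j-j'|<g$.

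Finally, $z^g-w$ is a polynomial of degree $g$, so it has at most $g$ roots. Hence the $z_j$ are exactly the preimages of $w$, and there are exactly $g$ of them, all in $\T$.

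\textbf{Where care is needed.} There is no real obstacle here. The one point to watch is that the second claim concerns preimages \emph{in} $\T$; this is covered because all complex $g$th roots of a unimodular number are themselves unimodular. Combining the two parts gives a $g$-to-$1$ correspondence between unimodular zeros of $H_{\varepsilon,\delta}$ and unimodular solutions $w$ of the reduced equation. This correspondence is what will later yield the count $2g$ in Theorem~\ref{thm:main}.
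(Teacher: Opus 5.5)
Your proposal is correct and follows essentially the same route as the paper: the direct substitution $z^n=w^{n_0}$, $\overline z^{\,k}=z^{-k}=w^{-k_0}$ for the equivalence, then listing the $g$ distinct roots $\exp(\ii(\varphi+2\pi j)/g)$ for the preimage count. Your degree bound on $z^g-w$ and the remark that every complex $g$th root of a unimodular number is unimodular make explicit the completeness of that list, which the paper takes for granted.
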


\begin{proof}
Recall \eqref{eq:n0k0}.
For $z\in\T$, we note that
$z^n=(z^g)^{n_0}=w^{n_0}$ and
$\overline z^{\,k}=z^{-k}=(z^g)^{-k_0}=w^{-k_0}$.
Hence, $H_{\varepsilon,\delta}(z)=
w^{n_0}+\varepsilon w^{-k_0}+\delta$,
which proves \eqref{eq:reduced}.

Now, let $w\in\T$ be a solution of the equation on the right-hand side of~\eqref{eq:reduced}. Write $w=\e^{\ii t}$ for some $t\in\mathbb{R}$. The solutions of $z^g=w$ are given by
\[
z_j:=
\exp\left(\ii\frac{(t+2\pi j)}{g}\right),
\qquad
j=0,\ldots,g-1.
\]
These $g$ points are all different and belong to $\T$. Therefore, every solution
$w\in\T$ of the reduced equation on the right-hand side of \eqref{eq:reduced} has exactly $g$ preimages in $\T$.
\end{proof}

Setting $u=w^{n_0}$ and $v=w^{-k_0}$ reduces the right-hand side of \eqref{eq:reduced} to the four cases in the next lemma.

\begin{lemma}\label{lem:geometry}
Let $u,v\in\T$ and $\zeta:=\e^{2\pi\ii/3}$.
Then the following statements hold.
\begin{align*}
\textrm{(a)}\quad u+v+1=0
&\quad\Longleftrightarrow\quad
(u,v)=(\zeta,\zeta^2)\ \text{or}\ (\zeta^2,\zeta),\\
\textrm{(b)}\quad u+v-1=0
&\quad\Longleftrightarrow\quad
(u,v)=(-\zeta,-\zeta^2)\ \text{or}\ (-\zeta^2,-\zeta),\\
\textrm{(c)}\quad u-v+1=0
&\quad\Longleftrightarrow\quad
(u,v)=(\zeta,-\zeta^2)\ \text{or}\ (\zeta^2,-\zeta),\\
\textrm{(d)}\quad u-v-1=0
&\quad\Longleftrightarrow\quad
(u,v)=(-\zeta,\zeta^2)\ \text{or}\ (-\zeta^2,\zeta).
\end{align*}
\end{lemma}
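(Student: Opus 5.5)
The plan is to prove (a) directly by an elementary geometric argument, and then deduce (b), (c), (d) from (a) by sign substitutions that preserve $\T$.

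\emph{Proof of (a).} Suppose $u,v\in\T$ satisfy $u+v+1=0$, so $v=-1-u$. Taking moduli gives $1=|1+u|^2=2+2\,\mathrm{Re}\,u$, hence $\mathrm{Re}\,u=-1/2$. Since $|u|=1$, this forces $u=\zeta$ or $u=\zeta^2$. Substituting back, $v=-1-\zeta=\zeta^2$ or $v=-1-\zeta^2=\zeta$, using $1+\zeta+\zeta^2=0$. Conversely, both pairs satisfy the equation by the same identity. An equivalent way to see this: $0$ is the centroid of three unit vectors $u,v,1$, which must then form an equilateral triangle.

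\emph{Reduction of (b), (c), (d) to (a).} Each remaining equation becomes $u'+v'+1=0$ after the following substitutions.
\begin{itemize}
\item[(b)] Multiply $u+v-1=0$ by $-1$ to get $(-u)+(-v)+1=0$, and apply (a) to $(u',v')=(-u,-v)$. This gives $(u,v)=(-\zeta,-\zeta^2)$ or $(-\zeta^2,-\zeta)$.
\item[(c)] Write $u-v+1=0$ as $u+(-v)+1=0$, and apply (a) to $(u,-v)$. This gives $(u,v)=(\zeta,-\zeta^2)$ or $(\zeta^2,-\zeta)$.
\item[(d)] Multiply $u-v-1=0$ by $-1$ to get $(-u)+v+1=0$, and apply (a) to $(-u,v)$. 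This gives $(u,v)=(-\zeta,\zeta^2)$ or $(-\zeta^2,\zeta)$.
\end{itemize}
In every case the new pair still lies in $\T$, since negation preserves modulus, so (a) applies.

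There is no real obstacle. The only steps needing care are the modulus computation in (a), which determines $\mathrm{Re}\,u$, and tracking the signs correctly in the three substitutions.
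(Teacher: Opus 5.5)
Your proposal is correct and follows the paper's proof essentially verbatim: the modulus computation $1=|1+u|^2=2+2\operatorname{Re}u$ forcing $u\in\{\zeta,\zeta^2\}$ in (a), then deducing (b), (c), (d) by applying (a) to $(-u,-v)$, $(u,-v)$, $(-u,v)$. All of your sign tracking in the three substitutions checks out.
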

\begin{proof}
We prove (a); (b), (c), and (d) follow by applying (a) to $(-u,-v)$, $(u,-v)$, and $(-u,v)$, respectively.
Suppose $u+v+1=0$. Then $v=-1-u$, and
\[
1=|v|^2=|1+u|^2=2+2\operatorname{Re}u.
\]
Therefore, $\operatorname{Re}u=-1/2$. Write $u=x+\ii y$ with $x,y\in\mathbb R$ and note that
$y=\pm\sqrt3/2$. 
Hence, $u=\zeta$ or $u=\zeta^2$. Since $\zeta^3=1$ and $\zeta\ne1$, we have $1+\zeta+\zeta^2=0$. Therefore, $v=-1-u$ is the other member of $\{\zeta,\zeta^2\}$. Conversely, both ordered pairs have unit-modulus entries and satisfy $u+v+1=0$.
\end{proof}

\begin{remark}[Orders of roots of unity]\label{rem:primitive}
Recall that $\zeta:=\e^{2\pi\ii/3}$.
Note that the cube roots of unity are
$1,\zeta,\zeta^2$ and have orders $1,3,3$, respectively.
Moreover, the sixth roots of unity are $1,-\zeta^2,\zeta,-1,\zeta^2,-\zeta$,
of orders $1,6,3,2,3,6$, respectively. 
Indeed, 
for $m\in\Z$, $(\zeta^2)^m=1$ exactly when $3\mid2m$, or, equivalently, when $3\mid m$.
The other orders follow in the same way from divisibility.
Consequently, the only roots of unity of order six are
$-\zeta$ and $-\zeta^2$.
\end{remark}

We can now complete the proof of Theorem~\ref{thm:main}.
\begin{proof}[Proof of Theorem~\ref{thm:main}]
Write $w=z^g$. By Lemma~\ref{lem:reduction}, it is enough to solve
\begin{equation}\label{eq:weqn}
w^{n_0}+\varepsilon w^{-k_0}+\delta=0,
\quad w\in\T.
\end{equation}
Suppose that $(\varepsilon,\delta)=(1,1)$.
The proofs of the other cases proceed similarly, using parts (b), (c), and (d) of 
Lemma~\ref{lem:geometry} and Remark~\ref{rem:primitive}.
Then \eqref{eq:weqn} becomes
$w^{n_0}+w^{-k_0}+1=0$.
By part (a) of Lemma~\ref{lem:geometry}, we obtain
$\{w^{n_0},w^{-k_0}\}=\{\zeta,\zeta^2\}$,
which implies $w^{3n_0}=w^{3k_0}=1$. Since $\gcd(n_0,k_0)=1$, B\'ezout's identity
gives $w^3=1$. Note that $w=1$ does not satisfy $w^{n_0}+w^{-k_0}+1=0$.
Hence, $w\in\{\zeta,\zeta^2\}$.
Assume $w=\zeta$. Using the division Euclides algorithm, write $n_0=3p_0+r_0$ and $k_0=3p'_0+r'_0$ for some $p_0,p'_0\in \mathbb{N}\cup\{0\}$ and $r_0,r'_0\in \{0,1,2\}$. We observe that
\begin{equation}
\begin{split}
0=w^{n_0}+w^{-k_0}+1&=\zeta^{n_0}+\zeta^{-k_0}+1=
\zeta^{3p_0+r_0}+\zeta^{-3p'_0-r'_0}+1=\zeta^{r_0}+\zeta^{-r'_0}+1.
\end{split}
\end{equation}
Since $|\zeta|=1$, this implies $r_0,r'_0\in \{1,2\}$.
If $r_0=r'_0$, the statement is complete.
Assume without loss of generality that $r_0=1$ and $r'_0=2$. Note that
\begin{equation}
\begin{split}
0=\zeta+\zeta^{-2}+1=\zeta+\zeta+1,
\end{split}
\end{equation}
which yields a contradiction because $|\zeta|=1$.

The proof of the case $w=\zeta^2$ follows similarly.
\end{proof}

\section*{\textbf{Statements and declarations}}

\noindent
\textbf{Acknowledgments.} Gerardo Barrera would like to express his gratitude to the Center for Mathematical Analysis, Geometry and Dynamical Systems CAMGSD and Instituto Superior T\'ecnico (IST) Lisbon, Portugal
for all the facilities used along the realization of this work.

\noindent
\textbf{Funding.}
The research of Gerardo Barrera is funded by FCT/Portugal and the Recovery and Resilience Plan (PRR) through projects UID/04459/2025 (DOI: 10.54499/UID/04459/2025) and UID/PRR/04459/2025 (DOI:10.54499/UID/PRR/04459/2025).
The research of Waldemar Barrera is partially supported by SECIHTI-SNI 35822.
The research of Juan Pablo Navarrete is partially supported by SECIHTI-SNI 35874.

\noindent
\textbf{Availability of data and material.}
Data sharing not applicable to this article as no data-sets were generated or analyzed during the current study.

\noindent
\textbf{Conflict of interests.} The authors declare that they have no conflict of interest.

\noindent
\textbf{Ethical approval.} Not applicable.

\noindent
\textbf{Authors' contributions.}
All authors have contributed equally to the manuscript.

\end{document}